\newtheorem{prop}{Proposition}[section]
\newtheorem{thm}[prop]{Theorem}
\newtheorem{lem}[prop]{Lemma}
\newcommand{\ra}{\rightarrow}
\begin{document}

\begin{center}
\vskip 1cm{\LARGE\bf On Packing Densities of Set Partitions}

\vskip 1cm
\large
Adam M.Goyt\footnote{Corresponding Author.  Phone 218.477.2206}\\
Department of Mathematics\\
Minnesota State University Moorhead\\
Moorhead, MN 56563, USA \\
\href{mailto:goytadam@mnstate.edu}{\tt goytadam@mnstate.edu}\\
\ \\
Lara K. Pudwell \\
Department of Mathematics and Computer Science\\
Valparaiso University\\
Valparaiso, IN 46383, USA\\
\href{mailto:Lara.Pudwell@valpo.edu}{\tt Lara.Pudwell@valpo.edu}\\
\end{center}

\begin{abstract}
We study packing densities for set partitions, which is a generalization of packing words.  We use results from the literature about packing densities for permutations and words to provide packing densities for set partitions.  These results give us most of the packing densities for partitions of the set $\{1,2,3\}$.  In the final section we determine the packing density of the set partition $\{\{1,3\},\{2\}\}$.

\bigskip\noindent \textbf{Keywords:} packing density, set partitions, words  
\end{abstract}

\section{Introduction}

Pattern avoidance and containment in combinatorial objects have been studied since they were introduced by Knuth~\cite{Knuthvol3}.  The first systematic study of pattern avoidance in permutations was done by Simion and Schmidt~\cite{SimionSchmidt}.  Burstein~\cite{B98} introduced pattern avoidance in words.  Klazar~\cite{abbafree,KlazarPartI,KlazarPartII} and Sagan~\cite{Saganpartitionpatterns} introduced the idea of pattern avoidance in set partitions.  In this paper we will explore the idea of packing patterns into set partitions.  That is to say, instead of trying to avoid a particular pattern we will find set partitions with the most copies of a pattern.  We will use this information to describe packing densities for different patterns.  

The idea of packing permutations was first studied by Stromquist~\cite{StromquistUnpub} in an unpublished paper and carried on by Price~\cite{Pricediss} in his dissertation.  Many people~\cite{AAHHS02,Hasto02,HSV04,Warren04,Warren06} advanced the study of packing permutations, and Burstein, H\"{a}st\"{o} and Mansour~\cite{BHM03} extended the concept of packing to words.  This paper is the first attempt at packing set partitions.  We will see that this is closely related to packing words, and depending on the definition of pattern containment in set partitions, some of the results on words carry over to this new context.  We begin with some definitions.   

Let $[n]=\{1,2,\dots,n\}$.  A {\it partition} $\pi$ of $[n]$ is a family of disjoint sets $B_1$, $B_2$, $\dots$, $B_k$ called {\it blocks} such that $\bigcup_{i=1}^k B_i=[n]$. We write $\pi=B_1/B_2/\dots/B_k$ where $$\min B_1<\min B_2<\dots<\min B_k.$$ For example $\pi=145/26/37$ is a partition of the set $[7]$.  Notice that $\pi$ has three blocks.  Let $\Pi_n$ be the set of partitions of $[n]$ and $\Pi_{n,k}$ be the set of partitions of $[n]$ with at most $k$ blocks.    

Let $\pi=B_1/B_2/\dots/B_k$ be a partition of $[n]$.  We associate to $\pi$ the word $\pi_1\pi_2\cdots\pi_n$, where $\pi_i=j$ if and only if $i\in B_j$.  So the word associated to the partition $145/26/37$ is $1231123$.  

Let $[k]^n$ be the set of words with $n$ letters from the alphabet $[k]$.  If $w\in [k]^n$, we may {\it canonize} $w$ by replacing all occurrences of the first letter by 1, all occurrences of the second occurring letter by 2, etc.  For example the word $w=3471344574$ has canonical form $1234122532$.  The set $\Pi_n$ and the set of all canonized words of length $n$ are in obvious bijection with each other.

Let $u=u_1u_2\cdots u_n$ and $w=w_1w_2\cdots w_n$ be words.  We say that $u$ and $w$ are {\it order isomorphic} if $u_i = u_j$ respectively $u_i<u_j$ if and only if $w_i = w_j$ respectively $w_i<w_j$ for any $1\leq i \not= j \leq n$.    

For the duration of this paper we will discuss set partitions in the form of canonized words.  We say that a partition $\sigma=\sigma_1\sigma_2\cdots\sigma_n$ of $[n]$ {\it contains} a copy of partition $\pi=\pi_1\pi_2\cdots\pi_k$ of $[k$] in the {\it restricted sense} if there is a subsequence $\sigma'=\sigma_{i_1}\sigma_{i_2}\cdots\sigma_{i_k}$ such that such that $\sigma'$ and $\pi$ are order isomorphic.  We say that a partition $\sigma=\sigma_1\sigma_2\cdots\sigma_n$ of $[n]$ {\it contains} a copy of partition $\pi=\pi_1\pi_2\cdots\pi_k$ of $[k]$ in the {\it unrestricted sense} if there is a subsequence $\sigma'=\sigma_{i_1}\sigma_{i_2}\cdots\sigma_{i_k}$ such that the canonization of 
$\sigma'$ is $\pi$.  If a partition $\sigma$ does not contain a copy of $\pi$ in the (un)restricted sense then we say that $\sigma$ \emph{avoids} $\pi$ in the (un)restricted sense.  

For example the partition $1213221$ contains many copies of $121$.  Positions two, four and five give the subsequence $232$ which is a copy of 121 in the restricted sense and the unrestricted sense.  Positions two, three and five give the subsequence $212$ which is only a copy in the unrestricted sense.  Furthermore, this partition avoids $1112$ in the restricted sense, but not the unrestricted sense, since the sequence $2221$ canonizes to $1112$.  

Let $S\subset \Pi_m$ and let $\nu_r(S,\pi)$ (respectively $\nu(S,\pi)$) be the number of copies of partitions from $S$ in $\pi$ in the restricted (respectively unrestricted) sense.  Let $$\mu_r(S,n,k)=\max\{\nu_r(S,\pi):\pi\in\Pi_{n,k}\},$$ and $$\mu(S,n,k)=\max\{\nu(S,\pi):\pi\in\Pi_{n,k}\}.$$  The probability of a randomly chosen subsequence of a partition $\pi$ to be a partition from $S$ in the restricted sense is $$d_r(S,\pi)=\frac{\nu_r(S,\pi)}{\binom{n}{m}}$$ and in the unrestricted sense is $$d(S,\pi)=\frac{\nu(S,\pi)}{\binom{n}{m}}.$$  The maximum probability is  $$\delta_r(S,n,k)=\frac{\mu_r(S,n,k)}{\binom{n}{m}}$$ and  $$\delta(S,n,k)=\frac{\mu(S,n,k)}{\binom{n}{m}},$$ respectively.  

The restricted sense of pattern containment in set partitions is the traditional definition.  It is most closely related to the definition of pattern containment in permutations as defined by Knuth~\cite{Knuthvol3}.  As such, when Burstein~\cite{B98} took on the study of pattern containment and avoidance in words, he defined pattern containment in words as follows.  A word $w=w_1w_2\dots w_n\in [\ell]^n$ {\it contains} a word $u=u_1u_2\dots u_m\in[k]^m$ if there is a subword $w'=w_{i_1}w_{i_2}\dots w_{i_m}$ that is order isomorphic to $u$.  Otherwise we say that $w$ {\it avoids} $u$.  This is exactly the restricted containment definition for set partitions.  We simply focus on canonized words. 

For a set of patterns $S\subset [k]^n$, Burstein, H\"{a}st\"{o}, and Mansour~\cite{BHM03} define $\hat{\nu}(S,\sigma)$ to  be the number of occurrences of patterns from $S$ in $\sigma$, and $$\hat{\mu}(S,n,k)=\max\{\hat{\nu}(S,\sigma):\sigma\in[k]^n\},$$ $$\hat{d}(S,\sigma)=\frac{\hat{\nu}(S,\sigma)}{\binom{n}{m}},$$ and $$\hat{\delta}(S,n,k)=\frac{\hat{\mu}(S,n,k)}{\binom{n}{m}}=\max\{\hat{d}(S,\sigma):\sigma\in[k]^n\}.$$

\begin{prop} For a set $S\subset \Pi_m$ of set partition patterns, we have $$\delta_r(S,n,k)=\hat{\delta}(S,n,k).$$\end{prop}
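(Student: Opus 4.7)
The plan is to prove the equality by establishing both inequalities. For the easy direction, $\delta_r(S,n,k) \le \hat{\delta}(S,n,k)$, I would observe that $\Pi_{n,k} \subseteq [k]^n$ under the canonical identification of a set partition with its word form, and that for $\pi \in \Pi_{n,k}$ we have $\nu_r(S,\pi) = \hat{\nu}(S,\pi)$ because both count the same thing: subsequences of $\pi$ order-isomorphic to some element of $S$. This is exactly the observation made in the paragraph just before the proposition statement, where the authors note that restricted containment for set partitions coincides with Burstein's word containment. Taking the maximum over the smaller set $\Pi_{n,k}$ instead of the larger set $[k]^n$ produces a value no larger, and dividing both sides by $\binom{n}{m}$ gives the first inequality.

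For the reverse direction, I would show that every $\sigma \in [k]^n$ admits a canonized partition $\pi \in \Pi_{n,k}$ with $\nu_r(S,\pi) \ge \hat{\nu}(S,\sigma)$. The natural candidate is the canonization $\pi = c(\sigma)$, obtained by relabeling the letters of $\sigma$ in order of first occurrence. This map sends $[k]^n$ surjectively onto $\Pi_{n,k}$ and fixes $\Pi_{n,k}$ pointwise, and $\sigma$ and $c(\sigma)$ share the same partition-of-positions. The plan is to argue that each occurrence of some canonized $\tau \in S$ in $\sigma$, i.e.\ a subsequence $\sigma_{i_1}\cdots\sigma_{i_m}$ order-isomorphic to $\tau$, gives rise to a subsequence of $c(\sigma)$ at the same positions that is still order-isomorphic to some (possibly different) canonized pattern in $S$, because the relabeling preserves the block structure of any subsequence and the canonized nature of the patterns in $S$ means only block structure together with an increasing-by-first-occurrence labeling is tested.

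The main obstacle is that canonization is a \emph{global} relabeling of $\sigma$ based on first occurrences throughout the entire word, whereas the order isomorphism to a canonized pattern requires a \emph{local} property of the subsequence: that its own first occurrences of distinct letters appear in increasing letter-value order. These can disagree, so in principle $c$ might convert a canonical-like subsequence of $\sigma$ into a non-canonical-like one in $c(\sigma)$. Handling this subtlety is the crux of the proof; the plan is to either verify the preservation carefully by tracking how the global first-occurrence relabeling interacts with positions inside the subsequence, or to argue more indirectly that the maximum of $\hat{\nu}(S,\cdot)$ over $[k]^n$ is always realized by some canonized word (so that $\hat{\mu}(S,n,k) \le \mu_r(S,n,k)$ follows without a per-word correspondence). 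Either route, combined with the trivial direction, delivers $\delta_r(S,n,k) = \hat{\delta}(S,n,k)$.
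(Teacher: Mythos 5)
Your easy direction matches the paper's: $\Pi_{n,k}\subset[k]^n$ and $\nu_r(S,\pi)=\hat{\nu}(S,\pi)$ for $\pi\in\Pi_{n,k}$, so $\mu_r(S,n,k)\le\hat{\mu}(S,n,k)$. The reverse direction is where the gap lies, and the obstacle you flag is not a subtlety to be ``verified carefully'' --- it is fatal to the map you propose. Relabeling by first occurrence is not an order-isomorphism of the word, and it can strictly \emph{decrease} the number of occurrences of a canonical pattern. Take $S=\{12\}\subset\Pi_2$ and $\sigma=31133\in[3]^5$: then $\hat{\nu}(S,\sigma)=4$ (position pairs $(2,4),(2,5),(3,4),(3,5)$), while $c(\sigma)=12211$ contains only $2$ occurrences of $12$. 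Moreover the occurrence at positions $(3,4)$ becomes the subsequence $21$ in $c(\sigma)$, which is not order-isomorphic to any canonized word, so even your weakened claim that each occurrence survives as an occurrence of some \emph{possibly different} pattern in $S$ fails. Your fallback --- ``argue that the maximum of $\hat{\nu}$ over $[k]^n$ is realized by some canonized word'' --- is precisely the statement that must be proved, and you give no argument for it.

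The paper's construction is genuinely different in both of its steps. Starting from an optimal $\sigma\in[k]^n$, it first relabels letters by \emph{value} (smallest letter $\mapsto 1$, next smallest $\mapsto 2$, and so on); this is an order-isomorphism of the whole word, so it preserves $\hat{\nu}(S,\cdot)$ exactly, and the only possible defect of the result is that some letter's first occurrence arrives too late for the word to be canonical. The paper then repairs that defect not by relabeling but by \emph{moving} the offending first occurrence to an earlier position, arguing that no occurrence of a pattern from $S$ is destroyed, and finishes by induction. To salvage your write-up, replace the first-occurrence relabeling with this two-step normalization (value relabeling, then letter repositioning), or else supply an independent proof that $\hat{\mu}(S,n,k)$ is attained on $\Pi_{n,k}$.
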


\begin{proof}  Let $S\subset \Pi_m$.  It suffices to show that $\mu_r(S,n,k)=\hat{\mu}(S,n,k)$.  Since $\Pi_{n,k}\subset[k]^n$ we have that $\mu_r(S,n,k)\leq \hat{\mu}(S,n,k)$.  We need only show the opposite inequality.  

Let $\sigma\in [k]^n$ satisfy $\hat{\nu}(S,\sigma)=\hat{\mu}(S,n,k)$.  Rewrite $\sigma$ using the smallest alphabet possible by replacing the smallest element by 1, the next smallest by 2, etc.  Call this new word $\tilde{\sigma}$.  Let $\tilde{\sigma}=\sigma_1\sigma_2\cdots\sigma_n$.  If $\tilde{\sigma}\in\Pi_{n,k}$ then we are done. 

 If $\tilde{\sigma}\not\in \Pi_{n,k}$ then suppose that $i\in[n]$ is the first position such that $\sigma_1\cdots\sigma_{i-1}\in\Pi_{i-1,k}$ and $\sigma_i>\max\{\sigma_j:1\leq j\leq i-1\}+1$.  If $\sigma_1\not=1$ then in the following argument let $i=1$ and set $\max\{\sigma_j:1\leq j\leq i-1\}=0$.  Let $t\in[n]$ be the smallest element such that $\sigma_t=\max\{\sigma_j:1\leq j\leq i-1\}+1$.  Any copy of an element from $S$ that involves $\sigma_t$ cannot involve any of the elements $\sigma_i,\sigma_{i+1},\dots,\sigma_{t-1}$.  So we do not lose any copies of elements from $S$ if we move the element $\sigma_t$ into the $i^{th}$ position.  Now, the word $\sigma_1\cdots\sigma_{i-1}\sigma_t\in\Pi_{i,k}$.  By induction we can find a word $\bar{\sigma}\in\Pi_{n,k}$ such that $\nu(S,\bar{\sigma})=\hat{\mu}(S,n,k)$.  
 
Thus, $\mu_r(S,n,k)\geq \hat{\mu}(S,n,k)$, and hence $\mu_r(S,n,k)= \hat{\mu}(S,n,k)$.\end{proof}

We are interested in the asymptotic behavior of $\delta_r(S,n,k)$ and $\delta(S,n,k)$ as $n\rightarrow \infty$ and $k\rightarrow\infty$.  By work done by Burstein, H\"{a}st\"{o} and Mansour~\cite{BHM03} for $S\subset\Pi_m$ we have that $\delta_r(S,n,k)\leq\delta_r(S,n-1,k)$ and $\delta_r(S,n,k)\geq\delta_r(S,n,k-1)$.  They show further that $\lim_{n\rightarrow\infty}\lim_{k\rightarrow\infty}\delta_r(S,n,k)$ and $\lim_{k\rightarrow\infty}\lim_{n\rightarrow\infty}\delta_r(S,n,k)$ exist.  Let's define these limits to be $\delta_r(S)$ and $\delta_r'(S)$ respectively.  We will give a similar result for unrestricted patterns.  

\begin{prop} Let $S\subset \Pi_m$, then for $n>m$ we have $\delta(S,n-1,k)\geq\delta(S,n,k)$, $\delta(S,n,k)\geq\delta(S,n,k-1)$.\end{prop}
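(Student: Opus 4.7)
The plan is to handle the two inequalities separately; both are modest variations on the averaging arguments used for words in~\cite{BHM03}.

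For $\delta(S,n,k)\geq\delta(S,n,k-1)$ the proof is essentially a tautology. A partition of $[n]$ into at most $k-1$ blocks is also a partition of $[n]$ into at most $k$ blocks, so $\Pi_{n,k-1}\subseteq\Pi_{n,k}$ and hence $\mu(S,n,k)\geq\mu(S,n,k-1)$. Since both sides are normalized by the same $\binom{n}{m}$, the inequality follows.

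For $\delta(S,n-1,k)\geq\delta(S,n,k)$ I would run a deletion-and-canonize averaging argument. Pick $\pi\in\Pi_{n,k}$ with $\nu(S,\pi)=\mu(S,n,k)$, and for each $j\in[n]$ let $\pi^{(j)}$ denote the canonization of the word obtained from $\pi$ by deleting its $j$-th letter; note that $\pi^{(j)}\in\Pi_{n-1,k}$. The key structural observation is that canonization commutes with restriction to a subsequence: the canonical form of a subword depends only on the relative order of its letters, not on whether the ambient word has already been canonized. Consequently, every unrestricted $S$-copy in $\pi$ whose index set avoids position $j$ corresponds to an unrestricted $S$-copy in $\pi^{(j)}$. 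Counting pairs (copy, omitted position) yields
$$\sum_{j=1}^{n}\nu(S,\pi^{(j)})\;\geq\;(n-m)\,\nu(S,\pi),$$
since each of the $\nu(S,\pi)$ copies occupies $m$ positions and so survives the remaining $n-m$ deletions. Pigeonhole then produces some $j^{*}$ with
$$\mu(S,n-1,k)\;\geq\;\nu(S,\pi^{(j^{*})})\;\geq\;\frac{n-m}{n}\,\mu(S,n,k).$$
Dividing by $\binom{n-1}{m}$ and applying the identity $\binom{n}{m}/\binom{n-1}{m}=n/(n-m)$ (which is where the hypothesis $n>m$ is used) collapses this to $\delta(S,n-1,k)\geq\delta(S,n,k)$.

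The only step that actually requires care is the structural claim that unrestricted pattern copies survive deletion and canonization; once that commutativity is verified, the rest of the argument is routine bookkeeping and a single binomial identity.
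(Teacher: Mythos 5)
Your proof is correct and follows essentially the same route as the paper: the paper disposes of $\delta(S,n,k)\geq\delta(S,n,k-1)$ by the same containment $\Pi_{n,k-1}\subseteq\Pi_{n,k}$, and for $\delta(S,n-1,k)\geq\delta(S,n,k)$ it simply cites the deletion-and-averaging argument of Proposition 1.1 in~\cite{AAHHS02} together with the remark that one may canonize afterward. You have written out in full exactly the argument the paper outsources to that citation, including the one point needing care (that canonizing commutes with passing to a subsequence, so unrestricted copies survive deletion), and your bookkeeping and binomial identity are right.
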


\begin{proof} The inequality $\delta(S,n-1,k)\geq\delta(S,n,k)$ follows from the proof of Proposition 1.1 in~\cite{AAHHS02}.  The repetition of letters is irrelevant, and we can simply canonize the resulting partition.

We have that $\delta(S,n,k)\geq\delta(S,n,k-1)$, since allowing for more blocks only increases the number of possible patterns. \end{proof}

Notice that a partition of $[n]$ can have at most $n$ blocks, so $\lim_{k\ra\infty} \delta(S,n,k)=\delta(S,n,n)$.  Furthermore, we have that $\delta(S,n,n)=\delta(S,n,n+1)\geq\delta(S,n+1,n+1)$.  Thus, $\{\delta(S,n,n)\}$ is nonnegative and decreasing and hence $$\delta(S)=\lim_{n\ra\infty}\lim_{k\ra\infty} \delta(S,n,k)$$ exists.  We call $\delta(S)$ the {\it packing density} of $S$.  

Of course we could take the limits in the opposite order.  That is consider the double limit $\lim_{k\ra\infty}\lim_{n\ra\infty}\delta(S,n,k)$.  Since $\delta(S,n,k)$ is decreasing in $n$ and nonnegative, we have that $\lim_{n\ra\infty}\delta(S,n,k)$ exists.  Now, $\lim_{n\ra\infty}\delta(S,n,k)$ is increasing in $k$ and bounded above by 1, thus we may define $$\delta'(S)=\lim_{k\ra\infty}\lim_{n\ra\infty}\delta(S,n,k).$$

An important question is whether $\delta'(S)=\delta(S)$. Burstein, H\"{a}st\"{o}, and Mansour~\cite{BHM03} conjectured that $\delta_r(S) = \delta_r'(S)$ and Barton~\cite{Barton04} proved it. It turns out that Barton's proof works for the unrestricted case as well.  

\begin{lem}[Barton] Let $S$ be a collection of patterns of length $m$ and $\sigma\in[n]^n$ be an $S$-maximizer.  Then $${n\choose m}\delta_r(S)\leq \nu_r(S,\sigma)\leq \frac{n^m}{m!}\delta_r'(S).$$\end{lem}
\begin{proof}  We know that $\delta_r(S,n,n)\geq \delta_r(S)$, so there is some $\sigma\in[n]^n$ satisfying $d_r(S,\sigma)\geq \delta_r(S)$, so $\nu_r(S,\sigma)={n\choose m}d_r(S,\sigma)\geq {n\choose m}\delta_r(S)$.  This gives the left-hand inequality.

To show the right-hand inequality we will show that given any $\sigma\in [n]^n$ we have that $\nu_r(S,\sigma)\leq\frac{n^m}{m!}\delta_r'(S)$.  For $t\geq1$, form the word $\sigma_t\in[n]^{tn}$ by repeating each letter of $\sigma$ $t$ times.  Now, every occurrence of a pattern $\pi\in S$ gives rise to $t^m$ occurrences of $\pi$ in $\sigma_t$, so $\nu_r(S,\sigma_t)\geq t^m\nu_r(S,\sigma)$.  Thus, 
$$\delta_r(S,n)=\lim_{t\rightarrow \infty}\delta_r(S,n,tn)\geq\lim_{t\rightarrow \infty}d_r(S,\sigma_t)\geq\lim_{t\rightarrow \infty}\frac{t^m\nu_r(S,\sigma)}{{tn\choose m}}=\frac{m!}{n^m}\nu_r(S,\sigma).$$  Now, $\delta_r'(S)\geq\delta_r(S,n)$, so the right hand inequality is proved. \end{proof}

The argument in Barton's proof holds whether we restrict the types of copies in a word or not.  Also, the construction of $\sigma_t$ from $\sigma$ will maintain the canonical form of the word.  So we could delete the subscript $r$ everywhere in the previous proof and lemma and have the same result.

\begin{lem} \label{Lemma:unrestricted} Let $S$ be a collection of patterns of length $m$ and $\sigma\in[n]^n$ be an $S$-maximizer.  Then $${n\choose m}\delta(S)\leq\nu(S,\sigma)\leq\frac{n^m}{m!}\delta'(S).$$\end{lem}

\begin{thm} Let $S\subset \Pi_m$.  Then $\delta(S)=\delta'(S)$.  \end{thm}

\begin{proof} We know from above that $\delta(S,k,k)\geq\delta(S,k)$ for $k\geq m$, so we have that $$\delta(S)=\lim_{k\rightarrow \infty}\delta(S,k,k)\geq\lim_{k\rightarrow \infty}\delta(S,k)=\delta'(S).$$ On the other hand, using Lemma \ref{Lemma:unrestricted} we have that ${n\choose m}\delta(S)\leq\frac{n^m}{m!}\delta'(S)$, so letting $n$ approach infinity gives us that $\delta(S)\leq \delta'(S)$. \end{proof}

Our main focus will be to determine $\delta(S)$ where $S\subset \Pi_3=\{111,112,121,122,123\}$ and $|S|=1$.  The patterns $112$ and $122$ are equivalent in the unrestricted sense because if $\sigma=\sigma_1\sigma_2\cdots\sigma_n$ contains $m$ copies of $112$ then the partition obtained by canonizing $\sigma'=\sigma_n\sigma_{n-1}\cdots\sigma_1$ contains $m$ copies of $122$.  Thus, we only need to determine the packing densities of each of the patterns $111$, $112$, $121$ and $123$.  

In the next section we will use previous results on words to answer questions about $\delta_r(S)$ for certain sets $S\subset \Pi_3$.  In Section 3 we will discuss some of the subtle differences between restricted and unrestricted copies and determine values of $\delta(S)$  for certain sets $S\subset \Pi_3$.  In Section 4 we will tackle the remaining partition of $\Pi_3$, the so called unlayered partition.  We will conclude by suggesting open problems.

\section{Packing in the Restricted Sense}

By Proposition 1.1, we have that $\delta_r(S,n,k)=\hat{\delta}(S,n,k)$.  This implies that the packing densities in the restricted sense are the same as the packing densities determined by Burstein, H\"{a}st\"{o} and Mansour~\cite{BHM03}.  We give their results here.  We give proofs for the first two and refer the reader to their paper for the remaining proofs.  

Consider the partition, $\beta_m$ of $[m]$ where every element is in the same block.  That is $\beta_m$ is a string of $m$ $1$'s.  In this case a copy of $c_m$ in a partition $\sigma$ is any constant sequence of length $m$.  Clearly, $d_r(\beta_m,\beta_n)=1$ for $n\geq m$, and hence $\delta_r(\beta_m)=1$ for any $m\geq1$.  

Now consider the opposite extreme $\gamma_m=12\cdots m$, i.e. the partition with every element in its own block.  Any copy of $\gamma_m$ is a strictly increasing sequence of length $m$.  Clearly, $d_r(\gamma_m,\gamma_n)=1$ for $n\geq m$, and hence $\delta_r(\gamma_m)=1$ for $m\geq 1$.

The packing densities in the restricted sense for the partitions of $[3]$ are given in the table below.  

\bigskip  

\begin{center}
\begin{tabular}{|c||c|c|c|c|} \hline
Partition $\pi$&111&112&121&123\\ \hline 
Packing Density $\delta_r(\pi)$&1&$2\sqrt{3}-3$&$\frac{2\sqrt{3}-3}{2}$&1\\ \hline
\end{tabular}
\end{center}

\section{Packing in the Unrestricted Sense}

As we mentioned before, our goal is to determine the packing densities of the partitions of $[3]$.  The packing densities of $112$ and $122$ are equivalent, so we need only consider the packing densities of $111$, $112$, $121$, and $123$.  The arguments that $\delta_r(111)=\delta_r(123)=1$ also show that $\delta(111)=\delta(123)=1$.  The pattern $112$ is a layered partition, which we will define below.  The partition $121$ is not layered, and in fact is the smallest nonlayered partition.  We will determine the packing density of $121$ in Section 4.  We now turn our attention to layered partitions in order to deal with $112$.  

Let $\pi$ be a partition of $[n]$.  We say that $\pi$ is {\it layered} if $\pi=11\cdots122\cdots2\cdots kk\cdots k$, where $k\in \mathbb{N}$.  Let $\pi$ be a partition of $[n]$.  The number of elements in the $i^{th}$ block, $B_i$, is the number of occurrences of $i$ in $\pi$.  We will say that $\pi$ is {\it monotone layered} if $\pi$ is layered and $|B_1|\leq|B_2|\leq\cdots\leq |B_k|$ or $|B_1|\geq|B_2|\geq\cdots\geq |B_k|$.  For example, $1112223$ is monotone layered, but $111233$ is layered but not monotone, and $122113$ is monotone but not layered.  

Let $\pi$ be a partition of $[n]$.  We say the {\it block structure} of $\pi$ is the multiset of block sizes of $\pi$. For example the block structure of $\pi=1121222333$ is $\{3,3,4\}$, so while monotonicity cares about the order of the sizes of the blocks, the specific block structure does not.    

\begin{lem} Let $\pi=11\cdots122\cdots2\in\Pi_m$ be a monotone increasing layered partition.  For each $\sigma\in\Pi_{n,2}$, let $\tilde{\sigma}\in\Pi_{n,2}$ be the unique monotone increasing layered partition with the same block structure as $\sigma$. We have that $\nu(\pi,\tilde{\sigma})\geq\nu(\pi,\sigma)$. \end{lem}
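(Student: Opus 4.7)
My plan is to reduce the lemma to the sharper inequality
\[
\nu(\pi,\sigma)\;\le\;\binom{p}{a}\binom{q}{b},
\]
where $p,q$ are the counts of $1$'s and $2$'s in $\sigma$, and then deduce the lemma by a short binomial-ratio comparison. First, decompose each copy of $\pi$ in $\sigma$ as a length-$(a+b)$ subsequence whose values read, in position order, either $1^a2^b$ (type (i), matching $\pi$ directly) or $2^a1^b$ (type (ii), which canonizes to $\pi$), and write $\nu(\pi,\sigma)=N_1(\sigma)+N_2(\sigma)$ accordingly. For $\tilde\sigma=1^{s_1}2^{s_2}$ all $1$'s precede all $2$'s, so $N_2(\tilde\sigma)=0$ and $N_1(\tilde\sigma)=\binom{s_1}{a}\binom{s_2}{b}$.

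Assuming the sharper inequality, the lemma follows. Since $\{p,q\}=\{s_1,s_2\}$ as multisets, $\binom{p}{a}\binom{q}{b}$ equals either $\binom{s_1}{a}\binom{s_2}{b}$ or $\binom{s_2}{a}\binom{s_1}{b}$; and for $a\le b$ and $s_1\le s_2$ one has $\binom{s_2}{a}\binom{s_1}{b}\le\binom{s_1}{a}\binom{s_2}{b}$ (trivially if $s_1<b$, else from the ratio $\binom{s_1}{a}\binom{s_2}{b}/(\binom{s_2}{a}\binom{s_1}{b})=\prod_{k=0}^{b-a-1}\frac{s_2-a-k}{s_1-a-k}\ge 1$), yielding $\nu(\pi,\sigma)\le\binom{p}{a}\binom{q}{b}\le\binom{s_1}{a}\binom{s_2}{b}=\nu(\pi,\tilde\sigma)$.

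For the sharper inequality, I would construct an injection from the $N_2$ type-(ii) copies of $\pi$ into the set
\[
\mathcal{F}=\left\{(A,B)\in\binom{I}{a}\times\binom{J}{b}:\max A\ge\min B\right\}
\]
of ``failed'' candidate type-(i) pairs, where $I,J$ are the $1$- and $2$-positions of $\sigma$. Since $|\mathcal{F}|=\binom{p}{a}\binom{q}{b}-N_1$, such an injection would yield $N_1+N_2\le\binom{p}{a}\binom{q}{b}$. For a type-(ii) copy $(A',B')$ with $A'\subseteq J$, $B'\subseteq I$, and $\max A'<\min B'$, the intended image takes $A$ to be an $a$-subset of $B'$ and $B=A'\cup E$ for some $(b-a)$-subset $E\subseteq J\setminus A'$; the condition $\max A\ge\min B$ is automatic from $\min B\le\min A'<\min B'\le\max A$. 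The main obstacle --- and the step requiring the most care --- is choosing $A$ and $E$ so that the map is injective, since the $b-a$ elements of $B'$ discarded from $A$ must be reconstructable from $E$; my plan is to do this via a canonical rank-based correspondence that sends $A'$ to an $a$-subset of $I$ at matching ranks and sends the discarded elements of $B'$ to elements of $J\setminus A'$ at matching ranks, with a short case analysis verifying well-definedness.
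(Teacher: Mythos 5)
Your reduction step and the final binomial-ratio comparison are both correct, but the ``sharper inequality'' $\nu(\pi,\sigma)\le\binom{p}{a}\binom{q}{b}$ on which the whole argument rests is false, so the proposed injection cannot exist. The source of the trouble is an asymmetry you have not accounted for: a type-(ii) copy uses $a$ elements of the $2$-block and $b$ elements of the $1$-block, so the natural ceiling on $N_2$ is $\binom{q}{a}\binom{p}{b}$, which can far exceed the number $\binom{p}{a}\binom{q}{b}-N_1$ of ``failed'' type-(i) pairs. Concretely, take $\pi=122$ (so $a=1$, $b=2$) and $\sigma=122111\in\Pi_{6,2}$, so $p=4$ and $q=2$. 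Then $N_1=1$ (only $\sigma_1\sigma_2\sigma_3$) and $N_2=6$ (either $2$ followed by any two of the three trailing $1$'s), so $\nu(\pi,\sigma)=7$, while $\binom{p}{a}\binom{q}{b}=\binom{4}{1}\binom{2}{2}=4$ and $|\mathcal{F}|=4-1=3<6=N_2$. An even smaller failure is $\sigma=1211$, where $\nu(122,\sigma)=1$ but $\binom{3}{1}\binom{1}{2}=0$; there your map is not even definable, since choosing $E\subseteq J\setminus A'$ with $|E|=b-a$ requires $q\ge b$. The lemma itself is safe in these examples because $\binom{s_1}{a}\binom{s_2}{b}=\binom{2}{1}\binom{4}{2}=12\ge 7$: the correct target is $\nu(\pi,\sigma)\le\binom{\min(p,q)}{a}\binom{\max(p,q)}{b}$, i.e.\ the small block must always be charged to the small part of $\pi$, and that statement is exactly the lemma rather than a consequence of any bound of the form $\binom{p}{a}\binom{q}{b}$.

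For comparison, the paper avoids this issue entirely by inducting on $n$: delete the last letter of $\sigma$, apply the induction hypothesis to the truncation, restore the letter so as to recover the original block structure, and then compare only the copies passing through the last position --- in $\tilde{\sigma}$ these number $\binom{b_1}{a_1}\binom{b_2-1}{a_2-1}$ (with $b_2$ the larger block), which is checked to dominate the count of copies of either type through the last position of $\sigma$. If you want to retain a bijective flavor, you would need an injection from \emph{all} copies in $\sigma$ (both types together) into $\binom{I'}{a}\times\binom{J'}{b}$ with $|I'|=\min(p,q)$ and $|J'|=\max(p,q)$; the case $q<b$ above shows that any argument treating the two blocks of $\sigma$ in their given roles, rather than sorted by size, is doomed.
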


\begin{proof}  Let $\pi$ be as described above and consider any partition $\sigma\in\Pi_{n,2}$.  If $\sigma$ has only one block then it is already layered and we are done. 

Suppose that $\sigma$ has two blocks, one of size $b_1$ and the other of size $b_2$, and suppose $b_2\geq b_1$.  Without loss of generality suppose that there are $b_1$ ones and $b_2$ twos.  Suppose the pattern $\pi$ has $a_1$ ones and $a_2$ twos.  We have two cases.  If $a_1=a_2$ then the maximal number of copies of $\pi$ in $\sigma$ is $2{b_1\choose a_1}{b_2\choose a_2}$.  This comes from the fact that given any $a_1$ of the ones in $\sigma$ there are at most $2{b_2\choose a_2}$ copies of $\pi$ involving these $a_1$ ones.  This maximum is achieved by the partition with $b_1$ ones followed by $b_2$ twos.  

Now, suppose that $a_1<a_2$.  If $n=m$ then the partition with the most copies of $\pi$ and the same block structure as $\sigma$ is $\pi$ itself which contains one copy.  Any others contain zero copies.  

Now, suppose that $n>m$.  We induct on $n$.  Remove the last letter from $\sigma$ and call this new partition $\sigma'$.  By induction there is a monotone increasing layered partition with the same block structure as $\sigma'$ that has at least as many copies of $\pi$ as $\sigma'$.  Now replace the last letter, and adjust so that the block structure of this new partition is the same as the original block structure of $\sigma$.  Call this new partition $\tilde{\sigma}$.  

We know that the number of copies of $\pi$ in $\tilde{\sigma}$ that do not include the last letter is at least as many as the number of copies of $\pi$ in $\sigma$ that do not include the last letter.  

We turn our attention to the number of copies of $\pi$ that do include the last letter.  Either the last letter in $\sigma$ was a 1 or a 2.  In $\tilde{\sigma}$ the last letter is a 2.

Suppose that there are $a_1$ 1's in $\pi$ and $a_2$ 2's in $\pi$. Suppose there are $b_1$ 1's in $\sigma$ and $b_2$ 2's in $\sigma$, and without loss of generality, assume that $b_2\geq b_1$.  There are $\binom{b_1}{a_1}\binom{b_2-1}{a_2-1}$ copies of $\pi$ in $\tilde{\sigma}$ that include the last letter of $\tilde{\sigma}$.  If the last letter in $\sigma$ was a 2 then there were at most $\binom{b_1}{a_1}\binom{b_2-1}{a_2-1}$ copies of $\pi$ involving $n$ in $\sigma$, which is the same as the number of such copies in $\tilde{\sigma}$.  If the last letter in $\sigma$ was a 1 then there were at most $\binom{b_1-1}{a_2-1}\binom{b_2}{a_1}$ copies of $\pi$ in $\sigma$ that involve the last letter, which is no more than the number of such copies of $\pi$ in $\tilde{\sigma}$.  That is to say, ${b_1-1\choose a_2-1}{b_2\choose a_1} \leq {b_1\choose a_1}{b_2-1\choose a_2-1}$.  The preceding inequality is inductively true, assuming that $a_1<a_2$.   \end{proof}

\begin{thm}  Let $\pi$ be a layered monotone increasing partition with exactly $k$ blocks.  For each $\sigma\in \Pi_n$, the layered monotone increasing partition, $\tilde{\sigma}$, with the same block structure as $\sigma$ satisfies $\nu(\pi,\tilde{\sigma})\geq\nu(\pi,\sigma)$.\end{thm}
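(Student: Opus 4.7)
My plan is to transform $\sigma$ into $\tilde{\sigma}$ in two stages, each preserving the block structure and not decreasing $\nu(\pi, \cdot)$. Let the block sizes of $\sigma$ be $b_1, \ldots, b_\ell$ in order of first occurrence.

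\textbf{Stage 1 (layering).} I apply the following operation $L_i$ iteratively over adjacent label pairs: on $I_i := \sigma^{-1}(\{i, i+1\})$, assign value $i$ to the first $b_i$ positions (in the order of $[n]$) and value $i+1$ to the remaining $b_{i+1}$ positions, leaving all other positions unchanged. Since $L_i$ strictly decreases the global inversion count $\#\{(p,q) : p<q,\ \sigma_p>\sigma_q\}$ whenever $\{i, i+1\}$ was interleaved, and leaves the multiset of block sizes alone, iterating the $L_i$'s drives $\sigma$ to a layered partition $\sigma^*$ with the same block structure. The key claim is that $\nu(\pi, L_i(\sigma)) \ge \nu(\pi, \sigma)$. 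To establish this, I classify each copy of $\pi$ in $\sigma$ by which of its pattern-indices $r$ have $v_r \in \{i, i+1\}$. Copies using neither value are unaffected. For the others, I fix the \emph{skeleton} consisting of the pattern-positions with $v_r \notin \{i, i+1\}$ together with their locations; the skeleton carves $[n]$ into intervals, and the completions using values from $\{i, i+1\}$ inside those intervals constitute a 2-block, 2-valued counting problem on the subword $\sigma|_{I_i}$, to which an interval-constrained version of Lemma 3.1 applies.

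\textbf{Stage 2 (rearrangement).} Once $\sigma^* = 1^{c_1} 2^{c_2} \cdots \ell^{c_\ell}$ is layered, any copy of $\pi$ must use values $v_1 < v_2 < \cdots < v_k$ in strictly increasing order (positions of smaller values precede those of larger values), so
\[
\nu(\pi, \sigma^*) = \sum_{1 \le j_1 < \cdots < j_k \le \ell} \binom{c_{j_1}}{a_1} \cdots \binom{c_{j_k}}{a_k}.
\]
I then show this sum is maximized, over permutations of $(c_1, \dots, c_\ell)$, when the $c_j$'s are sorted non-decreasingly to match the non-decreasing $(a_r)$. This is a rearrangement inequality handled by adjacent swaps: if $c_j > c_{j+1}$, swapping them does not decrease the sum, because index tuples $(j_1, \dots, j_k)$ avoiding both $j$ and $j+1$ contribute nothing to the change, tuples hitting exactly one of them pair off bijectively and cancel, and tuples hitting both (necessarily with $j_r = j$ and $j_{r+1} = j+1$) contribute $\binom{c_{j+1}}{a_r}\binom{c_j}{a_{r+1}} - \binom{c_j}{a_r}\binom{c_{j+1}}{a_{r+1}}$, which is non-negative because the ratio $\binom{x}{a_{r+1}}/\binom{x}{a_r}$ is non-decreasing in $x$ whenever $a_r \le a_{r+1}$. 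Iterating these swaps produces $\tilde{\sigma}$, and the chain $\nu(\pi, \tilde{\sigma}) \ge \nu(\pi, \sigma^*) \ge \nu(\pi, \sigma)$ finishes the proof.

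\textbf{Main obstacle.} Stage 2 is a standard rearrangement argument. The real difficulty is Stage 1: a copy of $\pi$ that uses values $i$ and/or $i+1$ constrains its $\{i, i+1\}$-positions to lie inside intervals carved by the rest of the copy, so Lemma 3.1 cannot be invoked verbatim on $\sigma|_{I_i}$. The remedy is to first establish an interval-constrained strengthening of Lemma 3.1, by adapting its induction on $n$ while tracking where the appended letter falls relative to each interval, and then to apply this strengthening gap-by-gap in the skeleton. Once this is in hand, summing the local inequalities over skeletons and shapes yields the global bound $\nu(\pi, L_i(\sigma)) \ge \nu(\pi, \sigma)$.
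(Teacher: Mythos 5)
Your two-stage strategy (first layer $\sigma$, then sort the block sizes) is a reasonable plan, and Stage 2 is sound: the formula $\nu(\pi,\sigma^*)=\sum_{j_1<\cdots<j_k}\prod_r\binom{c_{j_r}}{a_r}$ and the adjacent-swap rearrangement argument via monotonicity of $\binom{x}{a_{r+1}}/\binom{x}{a_r}$ both check out. The problem is Stage 1: the operation $L_i$ as you define it is not count-nondecreasing, so the key claim $\nu(\pi,L_i(\sigma))\geq\nu(\pi,\sigma)$ is simply false. Concretely, take $\pi=122$ (monotone increasing, $a_1=1\leq a_2=2$) and $\sigma=122111\in\Pi_{6,2}$. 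Then $\nu(122,\sigma)=7$ (one copy $1\,2\,2$ from position $1$ with the two $2$'s, plus $2\cdot\binom{3}{2}=6$ copies of the form $2\,1\,1$), while $L_1(\sigma)=111122$ has only $\nu(122,111122)=4\cdot\binom{2}{2}=4$. The reason is that $L_1$ keeps block $1$ (of size $b_1=4$) in front, whereas the count-maximizing layered arrangement of these two blocks puts the \emph{smaller} block first ($112222$, with $12$ copies, consistent with Lemma 3.1). Your local sort must therefore simultaneously reorder the two blocks by size, not just separate them; as written, iterating $L_i$ can strictly lose copies at the very first step, even in the two-block base case where no interval constraints arise.

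Even after repairing $L_i$ to place the smaller of the two blocks first, the heart of Stage 1 --- the ``interval-constrained strengthening of Lemma 3.1'' --- is only named, not stated precisely or proved, and you yourself identify it as the main obstacle. Since my counterexample shows the unconstrained version of your local claim is already delicate (it depends on which block lands in front), the constrained version cannot be waved through; the skeleton of a copy forces the two value classes into prescribed gaps, and one must also track copies that use only one of the two values, whose witnessing positions move under the sort. So the proof as submitted has a genuine gap at its central step. For comparison, the paper avoids local sorting altogether: it inducts on $n$ by deleting the last letter, observes that every copy of $\pi$ using the last letter $j$ must use $j$'s for the last pattern block, pushes all $j$'s to the end, applies induction on the number of blocks to the remaining $\ell-1$ blocks against the first $k-1$ blocks of $\pi$, and finishes with Lemma 3.1 to compare the last block against the rest. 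If you want to salvage your approach, you would need to formulate and prove the interval-constrained two-block lemma with the correct (size-sorted) target; otherwise the inductive route is the safer path.
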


\begin{proof}  Let $\pi$ be as described above, and assume that $\pi$ has exactly $k$ blocks.  Let $\sigma\in\Pi_n$, and assume that $\sigma$ has exactly $\ell$ blocks.  

Remove the last letter from $\sigma$, and call this new partition $\sigma'$.  By induction the layered monotone increasing partition $\tilde{\sigma'}$ with the same block structure as $\sigma'$ contains at least as many copies of $\pi$ as $\sigma'$.  

Now, replace the last letter and adjust so that the new partition, $\tilde{\sigma}$, has the same block structure as $\sigma$.  By the previous paragraph, we know that the number of copies of $\pi$ in $\tilde{\sigma}$ that do not involve the last letter is at least as many as the number of copies of $\pi$ in $\sigma$ that do not involve the last letter.  

We turn our attention to the number of copies that do involve the last letter.  Let $\nu(\pi,\sigma,n)$ be the number of copies of $\pi$ in $\sigma$ involving the last letter of $\sigma$.  Assume that the last letter in $\sigma$ is $j$.  Any copy of $\pi$ in $\sigma$ that involves the last letter, must have the $k$'s in $\pi$ corresponding to the $j$'s in $\sigma$.  Thus, we will not lose any copies of $\pi$ that involve the last letter by moving all of the $j$'s to the end of $\sigma$.  For ease of explanation, we will not canonize this new partition, and we will continue to call it $\sigma$.   

Let $\bar{\sigma}$ be the partition consisting of all but the $j$'s in $\sigma$, and let $\bar{\pi}$ be the partition consisting of the first $k-1$ blocks of $\pi$.  By induction on the number of blocks the number of copies of $\bar{\pi}$ in the layered monotone increasing partition, $\tilde{\bar{\sigma}}$, with the same block structure as $\bar{\sigma}$ is at least as many as the number of copies of $\bar{\pi}$ in $\bar{\sigma}$.  Note that we can obtain $\tilde{\bar{\sigma}}$ by moving elements around and canonizing using the elements $[1,j-1]\cup[j+1,\ell]$.  

Replace the first $\ell-1$ blocks of $\sigma$ by $\tilde{\bar{\sigma}}$, and call this new partition $\hat{\sigma}$.  We have that $\hat{\sigma}$ must be layered, but may or may not be monotone increasing.  Suppose that there are $b_j$ $j$'s in $\sigma$ and assume there are $b_\ell$ $\ell$'s in $\tilde{\sigma}$.  If $b_j=b_\ell$ then we are done.  If $b_j<b_\ell$, then by Lemma 3.1 we have $\nu(\pi,\hat{\sigma},n)\leq\nu(\pi,\tilde{\sigma},n)$.  By construction $\nu(\pi,\sigma,n)\leq\nu(\pi,\hat{\sigma},n)$.  

Thus, we have not reduced the number of copies of $\pi$ by replacing $\sigma$ by $\tilde{\sigma}$.  \end{proof}

Theorem 3.2 tells us that if $\pi$ is layered, monotone increasing, then if we want to know $\mu(\pi,n,k)$ we need only look at layered monotone increasing $\sigma\in\Pi_{n,k}$.  Of course everything we did in Lemma 3.1 and Theorem 3.2 can be done for layered monotone decreasing partitions.  This coincides with results of Burstein, H\"{a}st\"{o}, and Mansour~\cite{BHM03} on words and Price~\cite{Pricediss}, Albert, Atkinson, Handley, Holton, and Stromquist~\cite{AAHHS02} and Barton~\cite{Barton04} on permutations.  

Let a {\it nondecreasing layered word} be a word of the form $11\cdots122\cdots2\cdots kk\cdots k$, as defined in ~\cite{BHM03}. These are identical to layered partitions.  Furthermore, if $\pi$ and $\sigma$ are layered monotone increasing (decreasing) partitions then $\nu(\pi,\sigma)=\nu_r(\pi,\sigma)$.  Thus, we can use the results of~\cite{AAHHS02,Barton04,BHM03} to determine $\delta(\pi)$ where $\pi$ is a layered monotone increasing (decreasing) partition. 

The results of Price~\cite{Pricediss} give us that $\delta(112)=2\sqrt{3}-3$, $\delta(1122)=3/8$.  For $k\geq2$, $\delta(\underbrace{1\cdots 1}_{k}2)=k\alpha(1-\alpha)^{k-1}$, where $0<\alpha<1$ and $k\alpha^{k+1}-(k+1)\alpha+1=0$.  Furthermore, for $a,b\geq2$, $$\delta(\underbrace{1\cdots1}_{a}\underbrace{2\cdots2}_{b})=\binom{a+b}{a}\frac{a^ab^b}{(a+b)^{a+b}}.$$  The results of Albert et al.~\cite{AAHHS02} give us that $\delta(1123)=\delta(1233)=3/8$.  

\section{Packing 121}

In order to complete the determination of the packing densities of the partitions of $[3]$ we need to address the pattern 121.  We will prove that the partition of $[n]$ consisting of alternating 1's and 2's, i.e. $121212\cdots12$ is the maximizer.  

\begin{lem}  Let $\pi\in\Pi_{n,2}$ have exactly two blocks.  Assume that of the first $a+b$ elements $a$ are 1's and $b$ are 2's, and of the last $c+d$ elements $c$ are 1's and $d$ are 2's, where $n=a+b+c+d+2$.  If the $a+b+1^{st}$ element is a 2 and the $a+b+2^{nd}$ element is a 1 then switching the order of these two elements changes the number of copies of $121$ by $(b+c)-(a+d)$. \end{lem}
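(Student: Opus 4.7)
The plan is to partition the copies of $121$ in $\pi$ (and in the partition $\pi'$ obtained by the swap) according to which of the two swapped positions $p:=a+b+1$ and $q:=a+b+2$ each copy uses: neither, exactly one of them, or both. Copies using neither are identical in $\pi$ and $\pi'$ and contribute nothing to the difference. I will show the ``exactly one'' class also contributes equally via a bijection, so the entire net change of $(b+c)-(a+d)$ comes from the ``both'' class, which I will compute directly.

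A preliminary observation: since $\pi$ uses alphabet $\{1,2\}$, a copy of $121$ in the unrestricted sense is any triple of positions $(\alpha,\beta,\gamma)$ with $\alpha<\beta<\gamma$, $\pi_\alpha=\pi_\gamma$ and $\pi_\beta\neq\pi_\alpha$; equivalently, the subword read at these positions is $121$ or $212$. This gives a uniform check for copies.

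For the ``both'' class, since $p$ and $q$ are consecutive integers the only possible configurations are $(p,q,\gamma)$ with $\gamma>q$ and $(\alpha,p,q)$ with $\alpha<p$. A short case check of the letter patterns in $\pi$ (with $\pi_p=2$, $\pi_q=1$) and in $\pi'$ (with $\pi'_p=1$, $\pi'_q=2$) yields $a+d$ copies from this class in $\pi$ (namely $d$ from the first configuration, when $\pi_\gamma=2$, and $a$ from the second, when $\pi_\alpha=1$) and $b+c$ copies in $\pi'$, contributing $(b+c)-(a+d)$ to the net change.

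For the ``exactly one'' class I will use the bijection $\Phi$ that relabels $p$ as $q$ (or $q$ as $p$) inside the triple. This sends a $121$-copy in $\pi$ using only $p$ to a triple in $\pi'$ using only $q$, and conversely. $\Phi$ is well-defined because $p$ and $q$ are consecutive: in each of the three sub-cases (the relabeled position playing the role of $\alpha$, $\beta$, or $\gamma$) the other two positions lie outside $\{p,q\}$, so relabeling preserves the strict ordering $\alpha<\beta<\gamma$. And $\Phi$ is copy-preserving because $\pi_p=\pi'_q=2$, $\pi_q=\pi'_p=1$, and $\pi$ and $\pi'$ agree at every other position, so the three letters read off the triple are unchanged by $\Phi$. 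Hence the ``exactly one'' contributions to $\pi$ and $\pi'$ cancel, and the total change is $(b+c)-(a+d)$. The main obstacle is the disciplined bookkeeping: verifying that $\Phi$ preserves order in each of the three sub-cases, and checking that no triple using both $p$ and $q$ sneaks into the ``exactly one'' count, given how tightly $p$ and $q$ sit.
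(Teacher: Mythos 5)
Your proof is correct and follows essentially the same route as the paper: both arguments reduce the net change to the triples using both swapped positions and count those as $a+d$ before the swap versus $b+c$ after. The only difference is that you explicitly justify, via the relabeling bijection $\Phi$, the claim that copies using exactly one of the two positions are preserved, a step the paper simply asserts.
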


\begin{proof}  We have partition $\pi=\underbrace{\underline{\phantom{aaaaaaaaaaaaa}}}_{a\mbox{ }1's,\mbox{ }b\mbox{ }2's}21\underbrace{\underline{\phantom{aaaaaaaaaaaaa}}}_{c\mbox{ }1's,\mbox{ }d\mbox{ }2's}$.  By switching the 1 and 2 in positions $a+b+1$ and $a+b+2$, we obtain $\hat{\pi}=\underbrace{\underline{\phantom{aaaaaaaaaaaaa}}}_{a\mbox{ }1's,\mbox{ }b\mbox{ }2's}12\underbrace{\underline{\phantom{aaaaaaaaaaaaa}}}_{c\mbox{ }1's,\mbox{ }d\mbox{ }2's}$.  

The only copies of 121 that are lost or created are copies that involve both of these positions.  Thus, we lose $a$ copies of the form 121 and $d$ copies of the form 212.  We create $b$ copies of the form 212 and $c$ copies of the form 121.  This gives us a net change of $(b+c)-(a+d)$ copies.  \end{proof}

\begin{lem}  Let $\pi\in\Pi_{n,2}$ have exactly two blocks.  Assume that $\pi$ consists of $i$ 1's and $j$ 2's with $i\geq j$.  Then the partition $$\hat{\pi}=\underbrace{11\cdots1}_{\left\lceil(i-j-1)/2\right\rceil}\underbrace{1212\cdots121}_{2j+1}\underbrace{11\cdots1}_{\left\lfloor(i-j-1)/2\right\rfloor}$$ satisfies $\nu(121,\hat{\pi})\geq\nu(121,\pi)$.\end{lem}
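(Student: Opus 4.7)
The plan is to reduce the question to an explicit, one-variable-at-a-time optimization over the positions of the $2$'s in $\pi$. Let $p_1 < p_2 < \cdots < p_j$ be the positions of the $j$ copies of $2$ in $\pi$, and set $q_t = p_t - t$, which equals the number of $1$'s strictly to the left of the $t$-th copy of $2$. The sequence $(q_t)$ then satisfies $1 \leq q_1 \leq q_2 \leq \cdots \leq q_j \leq i$; the lower bound $q_1 \geq 1$ comes from the fact that $\pi$ is a canonized partition and so begins with $1$.

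Since $\pi$ uses only the letters $1$ and $2$, every subsequence of $\pi$ that canonizes to $121$ is either a literal $1,2,1$ subsequence or a literal $2,1,2$ subsequence. Counting each type in terms of $(q_t)$ gives
\[
\nu(121,\pi) \;=\; \sum_{t=1}^{j} q_t(i-q_t) \;+\; \sum_{1 \leq s < t \leq j}(q_t-q_s) \;=\; \sum_{t=1}^{j} q_t(c_t - q_t),
\]
where $c_t := i + 2t - j - 1$. The first sum counts $1,2,1$ subsequences by putting the middle $2$ at $p_t$ (with $q_t$ choices of $1$ on the left and $i-q_t$ on the right), while the second counts $2,1,2$ subsequences whose outer $2$'s sit at $p_s$ and $p_t$ (with exactly $q_t - q_s$ ones strictly between them).

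Each summand $q \mapsto q(c_t - q)$ is a concave parabola maximized at $q = c_t/2 = (i-j-1)/2 + t$. The key observation is that these unconstrained optima form a strictly increasing arithmetic progression with common difference $1$, so rounding each of them independently to a closest integer still yields a non-decreasing sequence, automatically respecting the constraint $q_1 \leq \cdots \leq q_j$. The choice $q_t = \lceil(i-j-1)/2\rceil + t$ lies within $1/2$ of $c_t/2$ for every $t$, hence maximizes each term individually over $\mathbb{Z}$. Translating back, the optimal positions are $p_t = 2t + \lceil(i-j-1)/2\rceil$, which are exactly the positions of the $2$'s in $\hat\pi$; consequently $\nu(121,\pi) \leq \nu(121,\hat\pi)$.

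The main thing to watch out for is the counting step: under unrestricted containment, literal $2,1,2$ subsequences also canonize to $121$ and so must be included, and this is precisely what converts the seemingly decoupled concave quadratics $q_t(i-q_t)$ into the shifted family $q_t(c_t-q_t)$ whose optima happen to respect the monotonicity constraint. Once this formula is in hand, the optimization is immediate. One could alternatively iterate Lemma 4.1, showing that any $\pi$ differing from $\hat\pi$ admits a neutral or beneficial adjacent swap, but the direct computation above is cleaner and transparently explains why the alternating block $1212\cdots121$ placed as close to the middle as possible is the right configuration.
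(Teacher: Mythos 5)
Your proof is correct, and it takes a genuinely different route from the paper's. The paper argues locally: it invokes Lemma 4.1 to show that any run of $\ell\geq 2$ consecutive $1$'s flanked by $2$'s admits at least one boundary swap that does not decrease $\nu(121,\cdot)$ (since the two candidate swaps change the count by $(b+c)-(a+d)$ and its negative), forcing an alternating middle section, and then separately computes the contribution of the $a$ leading and $b$ trailing $1$'s as $a\binom{j+1}{2}+b\binom{j+1}{2}+abj$ and optimizes over $a+b$ fixed to get $a=b$ as nearly as possible. You instead parametrize $\pi$ globally by the prefix counts $q_t$ (number of $1$'s left of the $t$-th $2$), derive the closed form $\nu(121,\pi)=\sum_t q_t(c_t-q_t)$ with $c_t=i+2t-j-1$ by splitting into literal $121$ and literal $212$ subsequences, and maximize term by term, noting that the termwise optima $\lceil (i-j-1)/2\rceil+t$ are automatically monotone and feasible and reproduce exactly the positions of the $2$'s in $\hat{\pi}$. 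Your approach buys a fully rigorous, self-contained argument with an explicit objective function; in particular it sidesteps the termination/reachability issue implicit in the paper's iterated-swap argument (a sequence of non-decreasing local moves must still be shown to arrive at the claimed configuration), and it cleanly covers the degenerate case $i=j$ where the displayed form of $\hat{\pi}$ is only defined after the convention that the alternating block may end in a $2$. The paper's approach buys consistency with the surrounding machinery: Lemma 4.1 and the swap viewpoint are reused in Lemma 4.3 and in the multi-block reduction preceding Theorem 4.4. One small remark: the bound $q_1\geq 1$ from canonization is true but not needed, since your optimizer already satisfies it; the inequality $\nu(121,\hat{\pi})\geq\nu(121,\pi)$ only requires that your chosen point be feasible and maximize each summand over all of $\mathbb{Z}$, which you verify.
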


\begin{proof} We begin by showing that the middle section of $\hat{\pi}$ must have this alternating format.  Suppose in $\pi$ there is a string of $\ell+2$ elements with $\ell\geq2$ where the first and last elements are 2's and the remaining $\ell$ elements are 1's.  Now suppose that preceding the first 2 are $a$ 1's and $b$ 2's and succeeding the last 2 are $c$ 1's and $d$ 2's.  If we swap the 2 immediately preceding this run of $\ell$ 1's with the first 1 in the run , we will have a change of $(b+c+\ell)-(a+d+2)$ copies of 121.  Swapping the last 1 in the run with the 2 immediately following it gives us a change of $(a+d+\ell)-(b+c+2)$ copies of 121.  Since $\ell\geq2$, at least one of these must be nonnegative, so we can perform one of these swaps without decreasing the number of copies of 121.  A similar argument holds if we replace the 2's by 1's and vice versa. This gives us that we must have alternating 1's and 2's in the middle of $\hat{\pi}$.  

We turn our attention to the number of $1$'s that precede and succeed this alternating run.  Suppose that the alternating section is as described in the statement of the lemma and is preceded by $a$ 1's and succeeded by $b$ 1's.  The number of copies of 121 that involve these outside 1's is given by $$\left(\sum_{k=1}^jka\right)+\left(\sum_{k=1}^jkb\right)+abj.$$  The first sum gives the number of copies of 121 involving the one of the first $a$ 1's and a pair from the alternating section.  The second sum gives the number of copies of 121 involving one of the last $b$ 1's and a pair from the alternating section.  The last term is the number of copies of 121 using a 1 from the first $a$ and a 1 from the last $b$ and a 2 from the alternating section.  This expression simplifies to $a\binom{j+1}{2}+b\binom{j+1}{2}+abj$ which is maximized when $a=b$.      \end{proof}

These first two lemmas tell us that if $\sigma\in\Pi_{n,2}$ then among all partitions with the same block structure as $\sigma$ the one with the structure described in Lemma 4.2 has the most copies of $121$.  Furthermore, among those with the structure described in Lemma 4.2, the one that consists entirely of an alternating section has the most copies of 121.

\begin{lem}  Suppose that $\pi\in\Pi_n$ has structure described in Lemma 4.2 with $a$ $1$'s at the beginning, an alternating section involving $j$ 2's and $j+1$ 1's, and $a$ or $a-1$ 1's at the end.  (If $a=0$ and $n$ is even then we allow the alternating section to end in a 2.)  Then the number of copies of $121$ is maximized when $a=0$.  \end{lem}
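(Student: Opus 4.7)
The plan is to compute $\nu(121,\pi)$ explicitly as a polynomial in $a$ and $j$ and then exhibit a forward-difference identity that forces the count to grow as $a$ decreases. Since $\pi$ uses only the letters $1$ and $2$, every copy of $121$ in $\pi$ is a subsequence of the form $xyx$ with $\{x,y\}=\{1,2\}$. I split the copies according to the value of the outer letter: Type~I (two $1$'s flanking a $2$) and Type~II (two $2$'s flanking a $1$).

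Treat the odd case $n=2a+2j+1$ first, so $m:=(n-1)/2=a+j$. The $k$th $2$ of the alternating section sits at position $a+2k$, with $a+k$ ones to its left and $a+j+1-k$ to its right, so
\begin{equation*}
\text{(Type~I)} \;=\; \sum_{k=1}^{j}(a+k)(a+j+1-k)\;=\;ja^{2}+aj(j+1)+\binom{j+2}{3}.
\end{equation*}
For Type~II, only the $j+1$ ones inside the alternating section can lie strictly between two $2$'s, and a direct count yields $\text{(Type~II)} = \sum_{1\le k<\ell\le j}(\ell-k) = \binom{j+1}{3}$. Using the identity $\binom{j+2}{3}+\binom{j+1}{3}=\sum_{k=1}^{j}k^{2}$ and substituting $a=m-j$, the total simplifies to
\begin{equation*}
f(j)\;:=\;\nu(121,\pi)\;=\;(m+1)\,j(m-j)+\sum_{k=1}^{j}k^{2}.
\end{equation*}

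The heart of the argument is the identity
\begin{equation*}
f(j)-f(j-1)\;=\;(m+1)(m+1-2j)+j^{2}\;=\;\bigl((m+1)-j\bigr)^{2},
\end{equation*}
which is strictly positive for every $j\le m$. Hence $f$ is strictly increasing on $\{0,1,\dots,m\}$, so its maximum is attained at $j=m$, i.e.\ $a=0$, which corresponds to the alternating partition $1212\cdots 21$.

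The even case $n=2a+2j$ is handled analogously: Type~I now yields $ja^{2}+aj^{2}+\binom{j+1}{3}$ because the tail block has $a-1$ rather than $a$ ones, so for $a\ge 1$ the total is $f(a,j)=ajm+2\binom{j+1}{3}$ with $m=n/2=a+j$, and the analogous forward difference works out to $(m-j)(m-j+1)$, still non-negative with strict inequality for $j<m$. The boundary value $a=0$ (with alternating section $1212\cdots 12$ ending in a $2$) must be compared by a separate short computation and turns out to tie with $a=1$, so $a=0$ still attains the maximum. The main obstacle is nothing more than clean bookkeeping and care at this even-$n$ boundary; the real substance is the perfect-square identity $\bigl((m+1)-j\bigr)^{2}$, which makes monotonicity entirely transparent in the odd case.
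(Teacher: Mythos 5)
Your proof is correct, but it takes a genuinely different route from the paper's. The paper argues locally: starting from a configuration with $a\ge 1$, it converts the innermost $1$ of each flanking run into a $2$ and tallies the copies of $121$ destroyed and created by that single modification, obtaining a net gain of $a^2+(a-1)^2$ (respectively $2(a-1)^2$ when the runs have lengths $a$ and $a-1$, with a separate check at $a=1$); iterating the swap drives $a$ to $0$. You instead compute $\nu(121,\pi)$ exactly in closed form as a function of $j$ with $n$ fixed --- $f(j)=(m+1)j(m-j)+\sum_{k=1}^{j}k^{2}$ in the odd case and $ajm+2\binom{j+1}{3}$ in the even case --- and read off monotonicity from the factored forward differences $\bigl((m+1)-j\bigr)^{2}$ and $(m-j)(m-j+1)$. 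I verified your Type~I and Type~II counts, the binomial identity, and both difference computations; they are right, and they are consistent with the paper's figures, since the paper's swap moves $(a,j)$ to $(a-2,j+2)$ and indeed $f(j+2)-f(j)=(m-j)^{2}+(m-j-1)^{2}=a^{2}+(a-1)^{2}$. Your route costs more bookkeeping up front but buys transparency: the perfect square makes strict monotonicity immediate and replaces the paper's three-way case analysis, and it delivers the exact maximum as a byproduct, namely $\nu(121,\alpha_n)=(n^{3}-n)/24$ for odd $n$ but $2\binom{n/2+1}{3}=(n^{3}-4n)/24$ for even $n$ --- a small correction to the formula quoted just before Theorem~4.4, harmless for the density, which is still $1/4$. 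Your observation that $a=0$ merely ties with $a=1$ in the even case also checks out ($m(m-1)+2\binom{m}{3}=2\binom{m+1}{3}$) and matches the paper's remark that its $a=1$ subcase produces ``no net gain or loss.''
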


\begin{proof}  We begin with a partition $\pi$ that has the structure described above, and we assume that $a\geq 1$.  Since $a\geq1$ there is at least one extra 1 at the beginning and at least zero extra 1's at the end.  Assume that there are $a$ 1's at the beginning and the end.  By changing the last of the string of $a$ 1's at the beginning to a 2 and the first of the string of $a$ 1's at the end to a 2 we lose $2ja-j+2\binom{j+1}{2}$ copies of $121$ and gain $2(a-1)(j+a)+\binom{j+1}{2}+\binom{j+2}{2}$ copies of 121.  The net gain is $a^2+(a-1)^2$  copies of $121$.  

In the case where $\pi$ begins with $a$ 1's, ends in $(a-1)$ 1's and $a\geq2$, switching the last 1 in the first run to a 2 and the first 1 in the last run to a 2 gives a net gain of $2(a-1)^2$ copies of 121.  

Finally, in the case where $a=1$ and the last run of 1's consists of zero 1's we have two cases: either the alternating section ends in 1 or 2.  In this case we turn the first 1 into a 2.  If the alternating section ends in 1 then there is no net gain or loss of copies of 121.  If the alternating section ends in 2 there is a net gain of $j$ copies of 121.  In either of these cases we canonize after changing the 1 to a 2, to change the new word into a partition.  

Thus, the number of copies of $121$ in this case is maximized when $a=0$.  \end{proof}

Lemma 4.3 tells us that $\nu(121,\pi)$ for $\pi\in\Pi_{n,2}$ is maximized when $\pi$ is the partition consisting of alternating 1's and 2's.  We will now show that among partitions with any number of blocks the number of copies of $121$ is maximized by the partition consisting of alternating 1's and 2's.  We call the alternating partition of length $n$ $\alpha_n$.  Notice that $\nu(121,\alpha_n)=\frac{1}{24}(n^3-n)$ if $n$ is odd and $\nu(121,\alpha_n)=\frac{1}{24}(n^3-4n)$ if $n$ is even.

First of all suppose that $\sigma$ has $k>2$ blocks.  Since a copy of 121 involves only two blocks at a time, then we know that the partition $\hat{\sigma}$ with same block structure as $\sigma$ arranged in such a way that any two blocks have the structure described in Lemma 4.2 has at least as many copies of 121 as $\sigma$.  

\begin{thm}  For any partition $\pi\in\Pi_n$, $\nu(121,\pi)\leq\nu(121,\alpha_n)$.  \end{thm}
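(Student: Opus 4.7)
My plan is to close the argument by reducing the general case to the 2-block case already handled in Lemmas 4.1--4.3. If $\pi\in\Pi_n$ has at most two blocks, I apply the three lemmas in sequence: Lemma 4.1's adjacent-swap operation transforms $\pi$ into the structured form of Lemma 4.2 without decreasing $\nu(121,\cdot)$, and iterated application of Lemma 4.3 converts outer $1$'s into $2$'s until the partition is pure alternating. Each move preserves or improves the $121$ count, so $\nu(121,\pi)\leq\nu(121,\alpha_n)$ whenever $\pi$ has at most two blocks.

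For $\pi$ with $k\geq 3$ blocks, I use the observation in the paragraph preceding the theorem to pass to the rearrangement $\hat\pi$ having the same block structure as $\pi$, in which every pair of blocks restricts to a Lemma 4.2 form and $\nu(121,\hat\pi)\geq\nu(121,\pi)$. I then induct on $k$ by exhibiting a relabeling that merges the smallest block of $\hat\pi$ into another block, producing $\pi'\in\Pi_{n,k-1}$ with $\nu(121,\pi')\geq\nu(121,\hat\pi)$. Iterating brings the block count down to $2$, at which point the previous case finishes the proof. A natural candidate for the element to relabel is an ``outer'' element (in the sense of Lemma 4.3) of a smallest block; this mimics exactly the operation Lemma 4.3 performs inside a single pair of blocks.

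The main obstacle is verifying that this relabeling does not decrease the $121$ count globally. Lemma 4.3 controls the change inside the pair of blocks being merged, but the same single-element relabeling simultaneously affects every other pair of blocks containing one of the two affected labels: shifting an element from label $i$ to label $j$ costs some $121$ copies in each pair $(i,\ell)$ and can gain some in each pair $(j,\ell)$, and may also create or destroy ``mixed'' triples with three distinct values. Showing that the net effect across all these pairs is non-negative is the crux, and I expect it to follow by combining the local gain of Lemma 4.3 with the Lemma 4.2 structural constraints on every pair of blocks in $\hat\pi$, which pin down the positions of the extreme elements in each pair and sharply limit how cross-pair $121$ copies can be destroyed. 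Once this bookkeeping is in place the induction closes and the two-block case completes the bound $\nu(121,\pi)\leq\nu(121,\alpha_n)$.
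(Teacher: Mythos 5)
Your two-block reduction is fine and matches the paper's use of Lemmas 4.1--4.3. The problem is the case $k\geq 3$: your entire argument there rests on the claim that one can relabel an element so as to merge a smallest block into another block without decreasing $\nu(121,\cdot)$, and you never establish this --- you explicitly defer it (``I expect it to follow\dots'', ``once this bookkeeping is in place the induction closes''). That deferred step is not routine bookkeeping. Relabeling an element from block $i$ to block $j$ changes the counts in every pair $(i,\ell)$ and $(j,\ell)$ at once, and Lemmas 4.1--4.3 only control what happens inside a single pair of blocks occupying a fixed set of positions; they say nothing about how a third block interleaves with the pair being modified. As written, the induction does not close, and this is a genuine gap at the heart of the proof rather than a detail.

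The paper avoids any merging induction by exploiting an observation you half-make and then talk yourself out of: an unrestricted copy of $121$ has equal first and third entries, so it uses exactly two distinct letters --- there are no ``mixed triples with three distinct values'' to account for. Consequently, if $\sigma$ has blocks of sizes $b_1,\dots,b_k$, every copy of $121$ lives inside the restriction of $\sigma$ to exactly one pair of blocks, and that restriction is a two-block partition of length $b_i+b_j$. This gives immediately $\nu(121,\sigma)\leq\sum_{i<j}g(b_i+b_j)$ with $g(m)=\frac{1}{24}(m^3-m)$ the two-block maximum you already have, and the paper then argues that this bound falls below $g(n)$ once $k\geq 3$ (evaluating at equal block sizes to get $\binom{k}{2}g(2n/k)<g(n)$; note that since $g$ is convex this optimization step itself deserves more care than the paper gives it, so check the skewed block structures separately if you adopt this route). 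Replacing your merging induction with this pairwise decomposition turns the hard global step into a one-line counting identity plus an inequality in the block sizes, which is exactly the simplification your proposal is missing.
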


\begin{proof}  Let $g(n)=\left\{\begin{array}{ll}\frac{1}{24}(n^3-n)&n\mbox{ odd,}\\ \frac{1}{24}(n^3-4n)&n\mbox{ even.}\end{array}\right.$  We know that $g(n)$ is the best we can do with at most two blocks in the partition and that this is achieved by $\alpha_n$.  

Suppose that $\sigma\in\Pi_{n,3}$ and has exactly three blocks.  Suppose that there are $a$ 1's, $b$ 2's and $n-a-b$ 3's in the partition $\sigma$.  We know that among partitions with the same block structure as $\sigma$ the one with each pair of blocks arranged as in Lemma 4.2 has the most copies of 121.  Assume that $\sigma$ is arranged in this way.  

Now, the number of copies of 121 involving just the 1's and 2's in this partition is at most $g(a+b)$.  Similarly using the other two pairs of blocks we have at most $g(n-a)$ and $g(n-b)$ copies of 121.  This tells us that the number of copies 121 in this arrangement is bounded by $g(a+b)+g(n-a)+g(n-b)$.  This expression is maximized when $a=b=n/3$.  Thus, the number of copies of 121 is bounded above by $3g(2n/3)\leq\frac{n^3}{27}-\frac{n}{12}$, which is clearly less than $g(n)$.

In general assume that $\sigma\in\Pi_{n,k}$ has exactly $k$ blocks.  Again any two blocks in $\sigma$ when compared to each other must have the arrangement outlined in Lemma 4.2.  By the same argument above the number of copies of 121 in $\sigma$ is bounded above by $\binom{k}{2}g(2n/k)\leq\frac{n^3}{24k}-\frac{n^3}{24k^2}-\frac{n(k-1)}{24}$, which is again less than $g(n)$.  

Thus, $\nu(121,\alpha_n)=\mu(121,n,n).$\end{proof}

Theorem 4.4 tells us that $\delta(121,n,n)=\frac{g(n)}{\binom{n}{3}}$, and thus $\delta(121)=\lim_{n\ra\infty}\frac{g(n)}{\binom{n}{3}}=\frac{1}{4}$.  Notice that this is the first place in which packing densities for set partitions differ from packing densities for words.  It is not a dramatic increase in density, but the unrestricted packing density for 121 is greater than the restricted density for 121 as expected.  This gives us the following results for partitions of [3].  

\bigskip
\begin{center}

\begin{tabular}{|c||c|c|c|c|} \hline
Partition $\pi$&111&112&121&123\\ \hline 
Packing Density $\delta(\pi)$&1&$2\sqrt{3}-3$&$1/4$&1\\ \hline
\end{tabular}
\end{center}

\bigskip

One challenge that the authors found was proving a general result for packing layered set partitions.  For permutations and words it was proved that given a layered permutation pattern or a layered word pattern the object that maximized the number of copies of this pattern was also layered.  Such a proof for set partitions has proved elusive, and is desirable.

\end{document}